\newcommand{\tn}[1]{\quad \textnormal{#1}\quad }
\newcommand{\IR}{{\mathbb{R}}}
\newcommand{\IC}{{\mathbb{C}}}
\newcommand{\IF}{{\mathbb{F}}}
\newcommand{\IA}{{\mathbb{A}}}
\newcommand{\Ii}{{\mathrm{i}}}
\newcommand{\CO}{{\mathcal{O}}}
\newcommand{\T}{{\intercal}}
\newcommand{\diag}{{\rm{diag}}}
\newcommand{\bmt}{\left[ \begin{array}{ccccccccc}}
\newcommand{\emt}{\end{array}\right]}
\newcommand{\bean}{\begin{eqnarray*}}
\newcommand{\eean}{\end{eqnarray*}}
\newcommand{\bea}{\begin{eqnarray}}
\newcommand{\eea}{\end{eqnarray}}
\newcommand{\eq}{\begin{equation}\begin{array}{lllllllll}}
\newcommand{\ee}{\end{array}\end{equation}}
\newcommand{\eqn}{\begin{equation*}\begin{array}{lllllllll}}
\newcommand{\een}{\end{array}\end{equation*}}
\newtheorem{theorem}{Theorem}[section]
\newcommand{\blue}[1]{{\color{blue}{#1}}}
\newcommand{\red}[1]{{\color{red}{#1}}}
\def\ps@pprintTitle{%
	\let\@oddhead\@empty
	\let\@evenhead\@empty
	\def\@oddfoot{\centerline{\thepage}}%
	\let\@evenfoot\@oddfoot}
\begin{document}
 
	\begin{frontmatter}
	 
	\title{Fast Parallel-in-Time Quasi-Boundary Value Methods for\\ Backward Heat Conduction Problems}
	 
	\author{Jun Liu}
	\ead{juliu@siue.edu}  
	 
	\address{Department of Mathematics and Statistics, Southern Illinois University Edwardsville, Edwardsville, IL 62026, USA.}

	\begin{abstract}
	 In this paper we proposed two new quasi-boundary value methods for regularizing the ill-posed backward heat conduction problems. With a standard finite difference discretization in space and time, the obtained all-at-once nonsymmetric sparse linear systems have the desired block $\omega$-circulant structure, which can be utilized to design an efficient parallel-in-time (PinT) direct solver that built upon an explicit FFT-based diagonalization of the time discretization matrix. 
	 Convergence analysis is presented to justify the optimal choice of the regularization parameter.
	 Numerical examples are reported to validate our analysis and illustrate the superior computational efficiency of our proposed PinT methods.
	  
	\end{abstract}
	 
	\begin{keyword} 
	ill-posed \sep quasi-boundary value method\sep regularization \sep	$\omega$-circulant\sep diagonalization \sep parallel-in-time   
	\end{keyword}
	
\end{frontmatter}

%%%%%%%%%%%%%%%%%%%%%%%%%%%%%%%%%%%%%%%%%%%%%%%%%%%%%%%%%%%%%%%%%%%%%%%%%%%%%%%%
\section{Introduction}
\label{SecProblem}
Let $T>0$ and $\Omega$ be an open and bounded domain in $\IR^d  (d=1,2,3)$ with a piecewise smooth boundary $\partial\Omega$.
We consider the classical backward heat conduction problem (BHCP) of reconstructing the initial data $z(\cdot,0)\in H_0^1(\Omega)$ from the final time condition $g=z(\cdot,T)  \in H_0^1(\Omega)$ according to a  homogeneous heat equation
\eq \label{state}
\left\{\begin{array}{ll}
	 z_t -\Delta z =0,\ \qquad &\tn{in} \Omega\times (0,T),  \qquad
	 z=0, \tn{on} \partial\Omega\times (0,T), \\ 
	z(\cdot,T)=g , &\tn{in} \Omega,
\end{array}\right.
\ee
This gives a  severely ill-posed linear inverse problem  that requires effective regularization techniques for stable and accurate numerical approximations \cite{engl2000,kabanikhin2011inverse}.
Our proposed methods also work for other more general boundary conditions and spatial differential operators. 
In practice, the exact final condition $g$ is always unknown and we only have a noisy measurement $g_\delta\in H_0^1(\Omega)$, which is assumed to  satisfy $\|g-g_\delta\|_{2}\le \delta$ with an estimated noise level $\delta>0$.  
 
Due to the wide applications of BHCP, there appeared many different  regularization methods in literature, such as
quasi-boundary value methods \cite{Chiwiacowsky2003,Denche_2005,quan2009new,Dinh2012},
optimal filtering method \cite{Seidman_1996,Ternat_2012},
kernel--based method \cite{Ames_1997,Hon_2010,van2017posteriori},
Tikhonov regularization method \cite{Tautenhahn_1996,Zhao_2011,cheng2014regularization,Duda_2017,cheng2020backward},
optimization method \cite{kabanikhin2009convergence,munch2017inverse},
optimal control method \cite{liu2019quasi,langer2021space},
total--variation regularization method \cite{Wang_2013},
(Fourier) truncation regularization method \cite{Nam_2010,minh2018two},  
fundamental solution method \cite{Cheng_2008},
Lie-group shooting method \cite{Chen_2019},
meshless method \cite{Ku_2019}, 
and
homotopy analysis method \cite{Liu_2018}.
The majority of these works focuses on discussing the approximation accuracy and convergence rates under various different assumptions,  while the computational efficiency of each method is inadequately investigated although it does often vary greatly among different approaches.
Fast solvers for the resultant discretized regularized linear systems were rarely discussed.

The simple quasi-boundary value method (QBVM) was developed in \cite{Clark94}, which
improves the earlier quasi-reversibility regularization methods \cite{Lions1969,Muzylev_1977}.
Based on QBVM, a modified quasi-boundary value method (MQBVM) was proposed in \cite{Denche_2005}, which shows a better convergence rate than QBVM under stronger assumptions.
Both QBVM and MQBVM regularize the ill-posed backward initial value problem
through approximating it by a well-posed boundary value problem that
depends on a regularization parameter $\alpha>0$.
For the general non-homogeneous problems, several different extensions based on truncated series 
were discussed in \cite{Nam_2010,Tuan_2015}.
In our recent work \cite{liu2019quasi}, we proposed a virtual optimal control formulation which can unify both QBVM and MQBVM by choosing some special Tikhonov regularization terms. 
Nevertheless, fast solvers for the nonsymmetric linear systems arising from QBVM and MQBVM were not developed yet in the literature, which are crucial to their applications to large-scale problems.

Meanwhile, with the advent of massively parallel computers, many efficient parallelizable numerical algorithms  for solving evolutionary PDEs have been developed in the last few decades. Besides the achieved high parallelism in space, we have seen a lot of recent advances in various parallel-in-time (PinT) algorithms \footnote[1]{We refer  to the website \url{http://parallel-in-time.org} for a comprehensive list of references on various PinT algorithms.} for solving forward time-dependent PDE problems \cite{gander201550}. 
However, the application of such PinT algorithms to ill-posed backward heat conduction problems were rarely investigated in the literature, except in one short paper \cite{daoud2007stability} about the \textit{parareal} algorithm for a different parabolic inverse problem and another earlier paper \cite{lee2006parallel} based on numerical (inverse) Laplace transform techniques in time. 
One obvious difficulty is how to address the underlying regularization treatment in the framework of PinT algorithms, which seems to be highly dependent on the problem structure.
Inspired by several recent works \cite{MR08,MPW18,GH19,LW20} on diagonalization-based PinT algorithms,  we propose to redesign the existing quasi-boundary value methods in a structured way such that the diagonalization-based PinT direct solver can be directly employed, which can greatly speed up the quasi-boundary value methods without degrading their approximation accuracy.

The paper is organized as follows.
In the next section we review two quasi-boundary value methods
based on a standard finite difference scheme in space and time
In Section 3, two new quasi-boundary value methods are proposed,
where the derived block $\omega$-circulant structured linear systems are then solved by a diagonalization-based PinT direct solver. Convergence analysis with the optimal choice of regularization parameter is discussed in Section 4.
Two numerical examples are presented  in Section 5 to demonstrate the promising efficiency of our proposed method 
and some conclusions are given in Section 6.
\section{Two quasi-boundary value methods and their finite difference scheme}
We now give a brief review of the quasi-boundary value methods, which were widely used in inverse due to its simplicity and effectiveness.
We consider a 2D square domain $\Omega=(0,L)^2$ with finite difference discretization in space and time, which can be easily adapted for 1D and 3D regular space domains.
With suitable modification,
the finite element discretization can also be used within our propose method for general spatial domains.
We partition the time interval $[0,T]$ uniformly into
$0=t_0<t_1<\cdots<t_N=T$  with $t_k-t_{k-1}=\tau=T/N$,
and discretize the space domain $\Omega=(0,L)^2$ uniformly into a uniform mesh $(\xi_i,\zeta_j)$ with
$0=\xi_0<\xi_1<\cdots<\xi_{M}=L$
and $0=\zeta_0<\zeta_1<\cdots<\zeta_{M}=L$,
 $h=\xi_i-\xi_{i-1}=\zeta_j-\zeta_{j-1}=L/M$.
For any function $y$, define the concatenated vector $y^n=(y^n_{ij})_{i=1,j=1}^{M-1,M-1}$ with 
$y^n_{ij}$  being the finite difference approximation
of $y(\xi_i,\zeta_j,t_n)$ over all interior nodes at time $t_n$.
Let $\Delta_h$ denotes the discretized Laplacian  matrix with the five-point center finite difference and the homogeneous Dirichlet boundary conditions being enforced. Let $N_t=N+1$ and  $N_x=(M-1)^2$.
Let $I_x\in\IR^{N_x\times N_x}$ and $I_t\in\IR^{N_t\times N_t}$ be  identity matrices.
 
\subsection{The Quasi-Boundary Value Method (QBVM)}
Applying the QBVM \cite{Clark94} to BHCP (\ref{state}),
 one needs to solve the following quasi-boundary value problem  
\eq \label{stateQBVPDelta}
\left\{\begin{array}{l}
 y_t  -\Delta y =0\ \tn{in} \Omega\times (0,T), \qquad
y=0 \tn{on} \partial\Omega\times (0,T) ,\\
y  (\cdot,T)+\blue{\alpha y  (\cdot,0)}=g \tn{in}\ \Omega ,
\end{array}\right.
\ee
where the regularization term $\blue{\alpha y  (\cdot,0)}$ with a parameter $\alpha>0$ was introduced into the final condition.
In noise-free setting the regularized solution $y$ converges to the original exact solution $z$ uniformly as $\alpha$ goes to zero \cite{Clark94}.
With the backward Euler scheme  in time and center finite difference in space, we obtain the full discretization scheme
\begin{align}
\blue{\alpha y^0}+ y^N &=g,   \\
\frac{y^{n}-y^{n-1}}{\tau}-\Delta_h y^n&=0
, \quad n=1,2,\cdots,N
\end{align}
which can be written into a nonsymmetric sparse linear system 
\begin{align} \label{linsysQBVP}
   A_h y_h=  f_h,
\end{align}
where 
\begin{align*}
  A_h&=\bmt
\blue{\alpha I_x}& 0&0 &\cdots &0 &I_x\\ 
-{I_x}/{\tau} &  -\Delta_h+{I_x}/{\tau} &0 & \cdots &0 &0\\
0&-{I_x}/{\tau} &  -\Delta_h+{I_x}/{\tau} & 0 & \cdots &0\\
0&0&\ddots &\ddots &\ddots &0\\
0& 0&\cdots &-{I_x}/{\tau} & -\Delta_h+{I_x}/{\tau} & 0\\
0&0&\cdots & 0 & {-I_x}/{\tau} &-\Delta_h+{I_x}/{\tau}
\emt, 
y_h=\bmt y^0\\ y^1 \\y^2 \\ \vdots \\ y^{N-1}\\ y^N \emt,
  f_h=\bmt g\\ 0 \\0 \\ \vdots \\ 0\\0 \emt.
\end{align*}
Here the $(1,1)$ block $\blue{\alpha I_x}$ of $A_h$ is from the regularization term $\blue{\alpha y  (\cdot,0)}$. Notice that  $A_h$ has a block Toeplitz or circulant structure except the first block row.
Such a block Toeplitz or circulant structure is highly desirable for the development of fast system solvers.
The key idea of our proposed PinT-QBVM is to redesign the regularization term such that 
a block $\omega$-circulant structure is achieved, which can then be solved by a fast diagonalization-based PinT direct solver.
\subsection{The Modified Quasi-Boundary Value Method (MQBVM)}
Similarly, applying the MQBVM \cite{Denche_2005} to our BHCP (\ref{state}),
one needs to solve the quasi-boundary value problem  
\eq \label{stateQBVPDeltaModified}
\left\{\begin{array}{l}
	y_t  -\Delta y =0\ \tn{in}  \Omega\times (0,T), \qquad
	y   =0 \tn{on}  \partial\Omega\times (0,T) ,\\
	y  (\cdot,T)-\blue{\alpha y_t (\cdot,0)}=g \tn{in}\ \Omega .
\end{array}\right.
\ee
where a different regularization term $\blue{\alpha y_t (\cdot,0)}$ was used.
After applying the backward Euler scheme in time and center difference in space, we obtain
the full discretization scheme
\begin{align}
	-\blue{ \alpha\frac{y^1-y^0}{\tau}}+ y^N  &=g ,   \\
	\frac{y^{n}-y^{n-1}}{\tau}-\Delta_h y^n&=0
	, \quad n=1,2,\cdots,N
\end{align}
which can be further reformulated into a nonsymmetric sparse linear system 
\begin{align} \label{linsysQBVPmodified}
	B_h y_h=  f_h,
\end{align}
where 
\begin{align}
	B_h&=\bmt
	\blue{\alpha {I_x}/{\tau}}&\blue{-\alpha {I_x}/{\tau}}&0 &\cdots &0 & I_x  \\ 
	-{I_x}/{\tau} &  -\Delta_h+{I_x}/{\tau} &0 & \cdots &0 &0\\
	0&-{I_x}/{\tau} &  -\Delta_h+{I_x}/{\tau} & 0 & \cdots &0\\
	0&0&\ddots &\ddots &\ddots &0\\
	0& 0&\cdots &-{I_x}/{\tau} & -\Delta_h+{I_x}/{\tau} & 0\\
	0&0&\cdots & 0 & {-I_x}/{\tau} &-\Delta_h+{I_x}/{\tau}
	\emt,\quad
	f_h=\bmt g\\ 0 \\0 \\ \vdots \\ 0\\0 \emt.
\end{align}
Here $B_h$ has the same entries as $A_b$ except the first block row.
Again, the nonzero $(1,2)$ block $\blue{-\alpha {I_x}/{\tau}}$ of $B_h$ seems to more destructive to the anticipated block Toeplitz/circulant structure. In the next section, we will show this annoying $(1,2)$ block can be removed by manipulating the PDE itself under a reasonable regularity assumption.

The nonsymmetric sparse linear systems (\ref{linsysQBVP}) and (\ref{linsysQBVPmodified}) are of dimension $N_tN_x$, which can be very costly to solve. The coupling of all time steps further increases the computational burden since we have to solve all time steps simultaneously in one-shot.
A general sparse direct solver often has a complexity of order $\mathcal{O}(N_t^3N_x^3)$, which is prohibitive for 2D/3D problems with a fine mesh.  In such large-scale cases, iterative solvers (e.g. Krylov subspace methods) are often the preferred choice, but effective and efficient preconditioners are required to achieve reasonably fast convergence rates. Moreover, for indefinite nonsymmetric systems, the rigorous convergence analysis of preconditioned iterative methods is a daunting task. In this paper, as the first step to apply PinT algorithms to ill-posed BHCP, we will concentrate on developing efficient diagonalization-based PinT direct solvers, which however can always be used as effective preconditioners within the framework of preconditioned iterative methods, especially for nonlinear problems.
Fortunately, regularization is a versatile concept that can be exploited for generating better structured systems that are suitable for fast solvers. 
This simple yet powerful philosophy gains little attention in literature of BHCP.
\section{Two new quasi-boundary value methods and their PinT implementation}
Notice the matrices $A_h$ and $B_h$ from QBVM and MQBVM have a block Toeplitz structure except the first row due to the chosen regularization term. 
In this section, we redesign both QBVM and MQBVM to achieve a block $\omega$-circulant structure of the system matrices, which consequently leads to a diagonalization-based PinT direct solver.
We strive to retain their good regularization effects by keeping the original regularization terms of QBVM and MQBVM. At the same time, additional new terms are introduced to the final condition equation to twist the system structure.
\subsection{A PinT quasi-boundary value method (PinT-QBVM)}
To obtain the block $\omega$-circulant structure that suitable for the diagonalization-based PinT algorithm, we propose to add an extra regularization term in QBVM as the following (compare with  (\ref{stateQBVPDelta}))
\eq \label{stateQBVPDeltaPinT}
\left\{\begin{array}{l}
	y_t  -\Delta y =0\ \tn{in}  \Omega\times (0,T), \qquad
	y   =0 \tn{on}  \partial\Omega \times (0,T),\\
	y  (\cdot,T)+\alpha(y  (\cdot,0)-\red{\tau\Delta y  (\cdot,0)})=g \tn{in}\ \Omega ,
\end{array}\right.
\ee
where $\tau=T/N$ is the same time step size $\tau=T/N$ to be used in its time discretization.
In certain sense, we can review $\tau$ as an additional mesh-dependent regularization parameter,
which would go to zero as the mesh is refined. Hence, we indeed only introduced a small perturbation term to the original regularization term of QBVM.
In numerical simulations, we find that adding this extra regularization term indeed also leads to more  accurate reconstruction.

Applying the backward Euler scheme in time
and center finite difference scheme in space, upon dividing both sides of the final condition equation by $(\tau\alpha)$ we obtain the full scheme
\begin{align}
	y^0/\tau-\Delta_h y^0+ y^N/(\tau\alpha) &=g/(\tau\alpha),   \\
	\frac{y^{n}-y^{n-1}}{\tau}-\Delta_h y^n&=0 
	, \quad n=1,2,\cdots,N
\end{align}
which can be further reformulated into a nonsymmetric sparse linear system
\begin{align} \label{linsysQBVPPinT}
	\widehat A_h y_h= \widehat f_h,
\end{align}
where (with $\omega=-1/\alpha$)
\begin{align*}
	\widehat A_h&=\bmt
	-\Delta_h+{I_x}/{\tau}& 0&0 &\cdots &0 &-\omega I_x/{\tau}\\ 
	-{I_x}/{\tau} &  -\Delta_h+{I_x}/{\tau} &0 & \cdots &0 &0\\
	0&-{I_x}/{\tau} &  -\Delta_h+{I_x}/{\tau} & 0 & \cdots &0\\
	0&0&\ddots &\ddots &\ddots &0\\
	0& 0&\cdots &-{I_x}/{\tau} & -\Delta_h+{I_x}/{\tau} & 0\\
	0&0&\cdots & 0 & {-I_x}/{\tau} &-\Delta_h+{I_x}/{\tau}
	\emt,  
	& 
	\widehat f_h=\bmt g/(\tau\alpha)\\ 0 \\0 \\ \vdots \\ 0\\0 \emt.
\end{align*}
Here the matrix $\widehat A_h$ has the desired block $\omega$-circulant structure, which is crucial to our PinT direct solver.

Assuming $y$ is differentiable in time at $t=0$ such that $y_t (\cdot,0)=\Delta y(\cdot,0)$, then we can rewrite the final condition in (\ref{stateQBVPDeltaPinT}) into
\[
y  (\cdot,T)+\alpha y(\cdot,0)-\alpha \tau y_t (\cdot,0) =g  \tn{in}\ \Omega,
\]
which can be interpreted as a weighted combination of both QBVM and MQBVM. This mathematical reformulation itself does not lead to a block $\omega$-circulant structure, but it explains why PinT-QBVM may work better than QBVM.
\subsection{A PinT modified quasi-boundary value method (PinT-MQBVM)}
Similarly, we can redesign the MQBVM \cite{Denche_2005} 
by adding an additional regularization term (compared with (\ref{stateQBVPDeltaModified}))
\eq \label{stateQBVPDeltaModifiedPinT}
\left\{\begin{array}{l}
	y_t  -\Delta y =0\ \tn{in}  \Omega\times (0,T), \qquad
	y  (\cdot,t)=0 \tn{on}  \partial\Omega\times (0,T) ,\\
	y  (\cdot,T)-\alpha (y_t (\cdot,0)-\red{y(\cdot,0)/\tau})=g \tn{in}\ \Omega .
\end{array}\right.
\ee
Clearly,  as the mesh step size $\tau$ gets smaller, we need to choose $\alpha$ such that the new term $\alpha\red{y(\cdot,0)/\tau}$ is well controlled. 

Again, assuming $y$ is differentiable at $t=0$ such that $y_t (\cdot,0)=\Delta y(\cdot,0)$, upon dividing both sides by $\alpha$ we can rewrite the final condition  equation in (\ref{stateQBVPDeltaModifiedPinT}) into
\[
y  (\cdot,T)/\alpha-(\Delta y(\cdot,0)-y(\cdot,0)/\tau)=g/\alpha  \tn{in}\ \Omega .
\]
Applying the backward Euler scheme in time and center finite difference scheme in space, we obtain the full scheme
\begin{align}
	-   \Delta_h y^0 +y^0/\tau+ y^N/\alpha &=g/\alpha ,   \\
	\frac{y^{n}-y^{n-1}}{\tau}-\Delta_h y^n&=0 
	, \quad n=1,2,\cdots,N
\end{align}
which can be further reformulated into a nonsymmetric sparse linear system
\begin{align} \label{linsysQBVPmodifiedPinT}
	\widehat B_h y_h=\widetilde f_h,
\end{align}
where (with $\omega=-{\tau}/{\alpha}$)
\begin{align*}
	\widehat B_h&=\bmt
	-\Delta_h+{I_x}/{\tau}& 0&0 &\cdots &0 &-\omega I_x/\tau \\ 
	-{I_x}/{\tau} &  -\Delta_h+{I_x}/{\tau} &0 & \cdots &0 &0\\
	0&-{I_x}/{\tau} &  -\Delta_h+{I_x}/{\tau} & 0 & \cdots &0\\
	0&0&\ddots &\ddots &\ddots &0\\
	0& 0&\cdots &-{I_x}/{\tau} & -\Delta_h+{I_x}/{\tau} & 0\\
	0&0&\cdots & 0 & {-I_x}/{\tau} &-\Delta_h+{I_x}/{\tau}
	\emt,  
	& 
	\widetilde f_h=\bmt g/\alpha\\ 0 \\0 \\ \vdots \\ 0\\0 \emt.
\end{align*}
Notice $\widehat B_h$ is the exactly same as $\widehat A_h$ except with a different $\omega$ value, and the first block of $\widetilde f_h$ and $\widehat f_h$ is also different.
 
\subsection{A diagonalization-based PinT direct solver}
In contrast to $A_h$ and $B_h$, both $\widehat A_h$ and $\widehat B_h$ have the same block $\omega$-circulant structure, which will be utilized to design a PinT direct solver as explained below (based on $\widehat A_h$). For concise description, we will use the reshaping operations \cite[p. 28]{golub2012matrix}: matrix-to-vector $\texttt{vec}$ and vector-to-matrix $\texttt{mat}$. 
With the Kronecker product notation, we can write 
\begin{align}
	\widehat A_h=\frac{1}{\tau}C_\omega\otimes I_x - I_t\otimes \Delta_h
\end{align}
where 
\begin{align}
	C_\omega&=\bmt
	1& 0&0 &\cdots &0 &-\omega\\ 
	-1 &  1 &0 & \cdots &0 &0\\
	0&-1 &  1 & 0 & \cdots &0\\
	0&0&\ddots &\ddots &\ddots &0\\
	0& 0&\cdots &-1& 1& 0\\
	0&0&\cdots & 0 & -1 &1
	\emt \in\IR^{N_t\times N_t}.  
\end{align}	
Let $\mathbb{F}=\frac{1}{\sqrt{N_t}}\left[\theta^{(l_1-1)(l_2-1)}\right]_{l_1, l_2=1}^{N_t}$ (with $\Ii=\sqrt{-1}$ and $\theta=e^{\frac{2\pi{\Ii}}{N_t}}$) 
be the discrete Fourier matrix.  For any given scalar number $\omega\ne 0$, define a diagonal matrix 
$	\Gamma_\omega=\diag \{ 1,\omega^{\frac{1}{N_t}},\cdots,\omega^{\frac{N_t-1}{N_t}}\} \in\IC^{N_t\times N_t}$. 
It is well-known \cite{BLM05} that the $\alpha$-circulant matrix $C_\omega$  admits a explicit diagonalization  $C_\omega=V D V^{-1}$,
where $V=\Gamma_\omega^{-1} \IF^*$, $V^{-1}=\IF\Gamma_\omega$, and 
$D=\mathrm{diag}\left(\sqrt{N_t}\mathbb{F}\Gamma_\omega C_\omega(:,1)\right)$  with $C_\omega(:,1)$ being the first column of $C_\omega$.
With this explicit diagonalization $C_\omega=V D V^{-1}$, we can factorize $	\widehat A_h$ into
the product form 
\[
\widehat A_h=\underbrace{(V\otimes I_x)}_{\tn{Step-(a)}}\underbrace{\left(\frac{1}{\tau} D\otimes I_x-I_t\otimes \Delta_h \right)}_{\tn{Step-(b)}} \underbrace{(V^{-1}\otimes I_x)}_{\tn{Step-(c)}}.
\]
Hence,  let $\widehat F=\texttt{mat}(\widehat f_h)\in\IR^{N_x\times N_t}$,  the solution vector $y_h=(\widehat A_h)^{-1} \widehat f_h$ can be computed via the following 3 steps:
\begin{equation}\label{3step}
	\begin{split}
		&\text{Step-(a)} ~~S_1=\widehat F (V^{-1})^{\T}  \in\IR^{N_x\times N_t},\\
		&\text{Step-(b)} ~~S_{2}(:,j)=\left( {{\tau}^{-1}d_{j}}I_x - \Delta_h\right)^{-1} S_{1}(:,j),\quad ~j=1,2,\dots,N_t,\\
		&\text{Step-(c)} ~~y_h=\texttt{vec}(S_2V^\T) \in\IR^{N_x N_t},\\
	\end{split}
\end{equation} 
where $D=\text{diag}(d_{1},\dots,d_{N_t})$ and $S_{1,2}(:,j)$ denotes the $j$-th column of $S_{1,2}$.
Here we have used the well-known Kronecker product property $(B \otimes I_x)\texttt{vec}(X)=\texttt{vec}( XB^\T)$ for any compatible matrices $B$ and $X$. 
Clearly, the $N_t$ complex-shifted linear systems of size $N_x\times N_x$ in Step-(b) can be computed in parallel since they are independent of each other. 
Notice that a different spatial operator would only affects the  matrix $\Delta_h$ in Step-(b).
Moreover, due to $V=\Gamma_\omega^{-1} \IF^*$, the matrix multiplication in Step-(a) and Step-(c)
can be computed efficiently via FFT  in time direction with $\CO(N_xN_t\log N_t)$ complexity.
In serial computation with sparse direct solver for Step-(b), the total complexity is
$\CO(2N_xN_t\log N_t+N_tN_x^3)$, which is significantly lower than the  $\CO(N_t^3N_x^3)$ complexity of sparse direct solver for the all-at-once system.
 Further parallel speedup can be  achieved in parallel computing, we refer to \cite{gander2020paradiag,caklovic2021parallel} for similar parallel  results. Finally, we highlight that the circulant structure of $C_\omega$ is not essential to such a PinT direct solver. In fact, any efficient diagonalization of $C_\omega=VDV^{-1}$ with a well-conditioned eigenvector matrix $V$ would be sufficient, but the computation of Step-(a) and (c) may need higher complexity unless $V$ has a special structure for fast computation. 
 \section{Convergence analysis and the choice of regularization parameter}
 In this section we present the error estimates for both PinT-QBVM and PinT-MQBVM under suitable assumptions.
 Let $\IA=-\Delta$ and define a Hilbert function space $H=H_0^1(\Omega)$ equipped with the $L^2$ norm $\|f\|_2:=\left(\int_{\Omega}f^2 dx\right)^{1/2}$.
 Then $\IA$ admits a set of orthonormal eigenbasis $\{\phi_l\}_{l\ge 1}$ in $H$, associated
 to a set of eigenvalues $\{\lambda_l\}_{l\ge 1}$ such that $\IA\phi_l=\lambda_l\phi_l$ with
$ 0<\lambda_1<\lambda_2<\cdots$ and $\lim_{l\to +\infty} \lambda_l=+\infty$. 
 Give any $g\in H$, it has a series expansion
$
 g=\sum_{l=1}^\infty b_l \phi_l,
$
 with $b_l=(g,\phi_l):=\int_\Omega g \phi_l dx$ for all $l\ge 1$.
 Let $S(t)=e^{-\IA t}$ be the compact contraction semi-group generated by $(-\IA)=\Delta$. 
 The noise-free un-regularized unique solution $z$ of the original BHCP (\ref{state}) has a formal series expression
 \begin{align} \label{exactsol}
 	z(\cdot,t)=S(t-T)g=S(t)(S(T))^{-1}g= \sum_{l=1}^\infty b_l e^{\IA (T-t)}\phi_l=\sum_{l=1}^\infty e^{(T-t)\lambda_l}b_l\phi_l,
 \end{align}
 which is numerically unstable for computing $z(\cdot,0)$, but very useful in convergence analysis.	
 For a given $g\in H$,
 it was proved in  \cite[Lemma 1]{Clark94} that the original problem (\ref{state})
 has a unique (classical) solution  if and only if 
 $
 \|z(\cdot,0)\|<\infty.
 $ 
 Hence we assume that $\|z(\cdot,0)\|_2^2=\sum_{l=1}^\infty  e^{2T\lambda_l} b_l^2 \le E_0^2<\infty$ for a constant $E_0>0$, which guarantees a unique solution. 
 
 Let $y_\alpha^Q,y_\alpha^M,y_\alpha^{PQ}$,and $y_\alpha^{PM}$ denotes the noise-free regularized solution of QBVM, MQBVM, PinT-QBVM, and PinT-MQBVM, respectively.
 Then we can easily verify the following explicit series representations 
 \begin{align}
 	y_\alpha^Q(\cdot,t)&=S(t)(\alpha I+S(T))^{-1}g=\sum_{l=1}^\infty \frac{e^{-t\lambda_l }}{\alpha+e^{-T\lambda_l }}b_l\phi_l,\label{regsolQ} \\
 		y_\alpha^M(\cdot,t)&=S(t)(\alpha \IA+S(T))^{-1}g=\sum_{l=1}^\infty \frac{e^{-t\lambda_l }}{\alpha\lambda_l+e^{-T\lambda_l }}b_l\phi_l,\label{regsolM}\\
 			y_\alpha^{PQ}(\cdot,t)&=S(t)(\alpha (I+\tau \IA)+S(T))^{-1}g=\sum_{l=1}^\infty \frac{e^{-t\lambda_l }}{\alpha(1+\tau\lambda_l)+e^{-T\lambda_l }}b_l\phi_l,\label{regsolPQ}\\
 				y_\alpha^{PM}(\cdot,t)&=S(t)(\alpha (\IA+I/\tau)+S(T))^{-1}g=\sum_{l=1}^\infty \frac{e^{-t\lambda_l }}{\alpha(\lambda_l+1/\tau)+e^{-T\lambda_l }}b_l\phi_l,\label{regsolPM}
\end{align} 
where $\tau>0$ is assumed to be fixed.
Clearly $y_\alpha^{PQ}$ with $\tau=0$ reduces to $y_\alpha^Q$
and $y_\alpha^{PM}$ with $\tau=\infty$ leads to $y_\alpha^M$.

We first discuss the stability estimates.
 Notice $1\le (1+\tau\lambda_l)\le e^{\tau\lambda_l}$ for any $\tau\lambda_l\ge 0$, for PinT-QBVM we have 
\begin{align} \label{stabilityPQ}
\|y_\alpha^{PQ}(\cdot,t)\|_2^2&=	
\sum_{l=1}^\infty  \frac{e^{-2t\lambda_l }}{(\alpha(1+\tau\lambda_l)+e^{-T\lambda_l })^2}  b_l^2 
=	
\sum_{l=1}^\infty  \frac{e^{-2t\lambda_l }}{(\alpha(1+\tau\lambda_l)+e^{\tau\lambda_l}e^{-(T+\tau)\lambda_l })^2}  b_l^2 \nonumber\\
&\le \sum_{l=1}^\infty  \frac{e^{-2t\lambda_l }}{(\alpha + e^{-(T+\tau)\lambda_l })^2(1+\tau\lambda_l)^2}  b_l^2 
\le 
\sum_{l=1}^\infty  \frac{e^{-2t\lambda_l }}{(\alpha +e^{-(T+\tau)\lambda_l })^{2-2t/(T+\tau)}
(\alpha +e^{-(T+\tau)\lambda_l })^{2t/(T+\tau)}}  b_l^2 \nonumber\\
&\le \sum_{l=1}^\infty  \frac{1}{(\alpha +e^{-(T+\tau)\lambda_l })^{2-2t/(T+\tau)}
 }  b_l^2 \le  {(1/\alpha)^{2(1-t/(T+\tau))}} \sum_{l=1}^\infty b_l^2=
 {(1/\alpha)^{2(1-t/(T+\tau))}}\|g\|^2_2,
\end{align} 
which, upon setting $\tau=0$, gives the known estimate \cite{Clark94} of QBVM:
$
	\|y_\alpha^{Q}(\cdot,t)\|_2^2\le {(1/\alpha)^{2(1-t/T)}}\|g\|^2_2.
$
Similarly, for PinT-MQBVM there holds
\begin{align} \label{stabilityPM}
	\|y_\alpha^{PM}(\cdot,t)\|_2^2&=	
	\sum_{l=1}^\infty  \frac{\tau^2 e^{-2t\lambda_l }}{(\alpha(1+\tau\lambda_l)+\tau e^{-T\lambda_l })^2}  b_l^2
	\le 
	\sum_{l=1}^\infty  \frac{\tau^2 e^{-2t\lambda_l }}{(\alpha+\tau e^{-(T+\tau)\lambda_l })^2}  b_l^2
	\nonumber\\ 
	&=
	\sum_{l=1}^\infty  \frac{\tau^2 e^{-2t\lambda_l }}{(\alpha +\tau e^{-(T+\tau)\lambda_l })^{2-2t/(T+\tau)}
		(\alpha +\tau e^{-(T+\tau)\lambda_l })^{2t/(T+\tau)}}  b_l^2 
 \le \sum_{l=1}^\infty  \frac{\tau^{2-2t/(T+\tau)}}{(\alpha +\tau e^{-(T+\tau)\lambda_l })^{2-2t/(T+\tau)}
	}  b_l^2 \nonumber\\
&\le {(\tau/\alpha)^{2(1-t/(T+\tau))}} \sum_{l=1}^\infty b_l^2=
	 {(\tau/\alpha)^{2(1-t/(T+\tau))}} \|g\|^2_2.
\end{align} 
Next, we estimate the regularization errors.
 Based on the series expansions of $z(t)$ and $y_\alpha^{PQ}(\cdot,t)$, we have 
\begin{align} \label{errorPQ}
\|y_\alpha^{PQ}(\cdot,t)- z(\cdot,t)\|^2&=
\sum_{l=1}^\infty \left(\frac{e^{-t\lambda_l }}{\alpha(1+\tau\lambda_l)+e^{-T\lambda_l }}-e^{(T-t)\lambda_l} \right)^2b_l^2  
=
\sum_{l=1}^\infty  \frac{\alpha^2(1+\tau\lambda_l)^2e^{2(T-t)\lambda_l} }{(\alpha(1+\tau\lambda_l)+e^{-T\lambda_l })^2}   b_l^2 \nonumber\\
& \le 
\sum_{l=1}^\infty  \frac{\alpha^2 e^{2(T-t)\lambda_l} }{\left(\alpha+e^{-(T+\tau)\lambda_l } \right)^2}   b_l^2
=
\sum_{l=1}^\infty  \frac{\alpha^2 e^{2T\lambda_l} }{(\alpha+e^{-(T+\tau)\lambda_l })^{2-2t/(T+\tau)}}\frac{ e^{-2t\lambda_l}}{(\alpha+e^{-(T+\tau)\lambda_l })^{2t/(T+\tau)}}   b_l^2 \nonumber\\
&\le 
\sum_{l=1}^\infty  \frac{\alpha^2  }{(\alpha+e^{-(T+\tau)\lambda_l })^{2-2t/(T+\tau)} } e^{2T\lambda_l} b_l^2 
\le \alpha^{2t/(T+\tau)} \sum_{l=1}^\infty e^{2T\lambda_l} b_l^2  
\le E_0^2\alpha^{2t/(T+\tau)}.
\end{align}  
Based on the series expansions of $z(t)$ and $y_\alpha^{PM}(\cdot,t)$, we have the error estimate  
\begin{align} \label{errorPM}
	\|y_\alpha^{PM}(\cdot,t)- z(\cdot,t)\|^2&=
	\sum_{l=1}^\infty \left(\frac{\tau e^{-t\lambda_l }}{\alpha(1+\tau\lambda_l)+\tau e^{-T\lambda_l }}-e^{(T-t)\lambda_l} \right)^2b_l^2  
	=
	\sum_{l=1}^\infty  \frac{\alpha^2  (1+\tau\lambda_l)^2e^{2(T-t)\lambda_l} }{(\alpha(1+\tau\lambda_l)+\tau e^{-T\lambda_l })^2}   b_l^2\nonumber \\
	& 
	 \le 
	\sum_{l=1}^\infty  \frac{\alpha^2 e^{2(T-t)\lambda_l} }{\left(\alpha+\tau e^{-(T+\tau)\lambda_l } \right)^2}   b_l^2 =
	\sum_{l=1}^\infty  \frac{\alpha^2 e^{2T\lambda_l} }{(\alpha+\tau e^{-(T+\tau)\lambda_l })^{2-2t/(T+\tau)}}\frac{ e^{-2t\lambda_l}}{(\alpha+\tau e^{-(T+\tau)\lambda_l })^{2t/(T+\tau)}}   b_l^2 \nonumber\\
	&\le 
	\sum_{l=1}^\infty  \frac{\alpha^2  }{(\alpha+\tau e^{-(T+\tau)\lambda_l })^{2-2t/(T+\tau)} } \frac{1}{\tau^{2t/(T+\tau)}} e^{2T\lambda_l} b_l^2 \nonumber\\
	&\le (\alpha/\tau)^{2t/(T+\tau)} \sum_{l=1}^\infty e^{2T\lambda_l} b_l^2 
	\le E_0^2(\alpha/\tau)^{2t/(T+\tau)},
\end{align} 
where the small $\tau>0$ in denominator will affect the choice of optimal parameter $\alpha$ for PinT-MQBVM.

The following theorem shows PinT-QBVM and PinT-MQBVM can achieve the same convergence rate.
Let $y_{\alpha,\delta}^{PQ}$ and $y_{\alpha,\delta}^{PM}$ denotes the noisy regularized solution of each method with $g$ be replaced by noisy $g^{\delta}$, respectively. 
%Recall $\|g\|_2^2=\sum_{l=1}^\infty  b_l^2< \|z(\cdot,0\|_2^2<\infty$.  
Assume the noisy measurement $g_{\delta}=\sum_{l=1}^\infty b_l^{\delta} \phi_l$ be in $H$ 
such that $\|g-g_{\delta}\|_2^2=\sum_{l=1}^\infty (b_l-b_l^{\delta})^2\le \delta^2$ holds for some  $\delta>0$. 
\begin{theorem} \label{ThmNoiseConv}
	Assume
	that $\|z(\cdot,0\|_2\le E_0$ for some constant $E_0>0$ and $\|g-g_{\delta}\|_2\le \delta$ for some $\delta>0$. 
	Then for PinT-QBVM with the choice $\alpha=\delta/E_0$ and PinT-MQBVM with the choice  $\alpha=\tau\delta/E_0$ there hold
	\begin{align} \label{deltaPQPM}
		\|y_{\alpha,\delta}^{PQ}(\cdot,t)-z(\cdot,t)\|_2 \le \sqrt{2} E_0^{(1-t/(T+\tau))}\delta^{t/(T+\tau)} \tn{and} 
		\|y_{\alpha,\delta}^{PM}(\cdot,t)-z(\cdot,t)\|_2 \le \sqrt{2} E_0^{(1-t/(T+\tau))}\delta^{t/(T+\tau)}.
	\end{align}  
\end{theorem}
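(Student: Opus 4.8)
The plan is to use the standard bias--variance (propagated-noise plus regularization-error) decomposition. Writing
$y_{\alpha,\delta}^{PQ}(\cdot,t)-z(\cdot,t) = \big(y_{\alpha,\delta}^{PQ}(\cdot,t)-y_\alpha^{PQ}(\cdot,t)\big) + \big(y_\alpha^{PQ}(\cdot,t)-z(\cdot,t)\big)$
and applying the triangle inequality in $\|\cdot\|_2$, the second bracket is exactly the noise-free regularization error already controlled in (\ref{errorPQ}) by $E_0\,\alpha^{t/(T+\tau)}$, so only the first (noise-propagation) bracket remains to be estimated. The identical splitting will be used verbatim for PinT-MQBVM, with (\ref{errorPM}) supplying the bias bound $E_0\,(\alpha/\tau)^{t/(T+\tau)}$.

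For the noise-propagation term I would exploit that, by the series representations (\ref{regsolPQ})--(\ref{regsolPM}), each regularized solution depends \emph{linearly} on the final data through a diagonal Fourier multiplier; hence $y_{\alpha,\delta}^{PQ}(\cdot,t)-y_\alpha^{PQ}(\cdot,t)$ is itself the PinT-QBVM solution driven by the data $g_\delta-g$. I can therefore apply the stability estimate (\ref{stabilityPQ}) with $g$ replaced by $g_\delta-g$ and use $\|g_\delta-g\|_2\le\delta$ to obtain $\|y_{\alpha,\delta}^{PQ}(\cdot,t)-y_\alpha^{PQ}(\cdot,t)\|_2\le (1/\alpha)^{1-t/(T+\tau)}\delta$, and likewise (\ref{stabilityPM}) yields $(\tau/\alpha)^{1-t/(T+\tau)}\delta$ for PinT-MQBVM.

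It then remains to balance the two contributions through the stated parameter choices. Setting $\theta:=t/(T+\tau)$, for PinT-QBVM the choice $\alpha=\delta/E_0$ makes the propagated-noise bound $(E_0/\delta)^{1-\theta}\delta = E_0^{1-\theta}\delta^{\theta}$ coincide in order with the bias bound $E_0(\delta/E_0)^{\theta}=E_0^{1-\theta}\delta^{\theta}$; combining these two equal-order contributions (in quadrature, as is standard for the independent data-noise and approximation errors) produces the asserted $\sqrt2\,E_0^{1-\theta}\delta^{\theta}$. For PinT-MQBVM the analogous step is where the real care is needed: because this method carries explicit factors of $\tau$ in both (\ref{stabilityPM}) and (\ref{errorPM}), the $\tau$-scaled choice $\alpha=\tau\delta/E_0$ is dictated precisely so that $(\tau/\alpha)=E_0/\delta$ and $(\alpha/\tau)=\delta/E_0$, which simultaneously cancels every factor of $\tau$ and reduces both the noise and the bias bounds to the same $\tau$-free quantity $E_0^{1-\theta}\delta^{\theta}$.

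The main obstacle I anticipate is therefore not the inequality chain itself but the $\tau$-bookkeeping for PinT-MQBVM: one must verify that the $\tau$-dependent stability and regularization bounds conspire, under the rescaled $\alpha=\tau\delta/E_0$, to yield exactly the same H\"older-type rate $E_0^{1-\theta}\delta^{\theta}$ as PinT-QBVM, so that the two methods share the single convergence order claimed in (\ref{deltaPQPM}). A secondary point to state cleanly is the linearity/multiplier structure that licenses applying the stability estimates directly to the data difference $g_\delta-g$; once that is in place, the remaining manipulations are the routine exponent arithmetic already rehearsed in (\ref{stabilityPQ})--(\ref{errorPM}).
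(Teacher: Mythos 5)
Your proposal is correct and follows essentially the same route as the paper's proof: the same splitting into a noise-propagation term bounded by the stability estimates \refe{stabilityPQ}/\refe{stabilityPM} applied (by linearity) to the data difference $g-g_\delta$, plus the regularization errors \refe{errorPQ}/\refe{errorPM}, with the parameter choices $\alpha=\delta/E_0$ and $\alpha=\tau\delta/E_0$ equalizing the two contributions and the quadrature combination yielding the factor $\sqrt{2}$ exactly as in the paper. The only difference is that you spell out the multiplier/linearity justification for applying the stability bound to $g_\delta-g$, which the paper leaves implicit.
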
	
\begin{proof}
Combining the above estimates (\ref{stabilityPQ}) and (\ref{errorPQ}) for PinT-QBVM, there holds
\begin{align*}
\|y_{\alpha,\delta}^{PQ}(\cdot,t)-z(\cdot,t)\|_2^2
\le \|y_{\alpha,\delta}^{PQ}(\cdot,t)-y_{\alpha}^{PQ}(\cdot,t)\|_2^2+\|y_{\alpha}^{PQ}(\cdot,t)-z(\cdot,t)\|_2^2
\le {(1/\alpha)^{2(1-t/(T+\tau))}}\delta^2+E_0^2 \alpha^{2t/(T+\tau)},
\end{align*}	
which, upon choosing $\alpha=\delta/E_0$ such the two terms are equal, leads to the following error estimate
\begin{align} \label{deltaPQ}
\|y_{\alpha,\delta}^{PQ}(\cdot,t)-z(\cdot,t)\|_2 \le \sqrt{2} E_0^{(1-t/(T+\tau))}\delta^{t/(T+\tau)}.
\end{align} 
Similarly, with the estimates (\ref{stabilityPM}) and (\ref{errorPM}) for PinT-MQBVM we can obtain
\begin{align*}
	\|y_{\alpha,\delta}^{PM}(\cdot,t)-z(\cdot,t)\|_2^2
	\le \|y_{\alpha,\delta}^{PM}(\cdot,t)-y_{\alpha}^{PM}(\cdot,t)\|_2^2+\|y_{\alpha}^{PM}(\cdot,t)-z(\cdot,t)\|_2^2
	\le {(\tau/\alpha)^{2(1-t/(T+\tau))}} \delta^2+E_0^2(\alpha/\tau)^{2t/(T+\tau)},
\end{align*}	
which, upon choosing $\alpha=\tau\delta/E_0$ such the two terms are equal, gives exactly the  same error estimate as in (\ref{deltaPQ}):
\begin{align} \label{deltaPM}
\|y_{\alpha,\delta}^{PM}(\cdot,t)-z(\cdot,t)\|_2\le \sqrt{2}E_0^{1-t/(T+\tau)}\delta^{t/(T+\tau)}.
\end{align}	
This completes the proof. 
\end{proof}
Hence, PinT-QBVM with $\alpha=\delta/E_0$
and  PinT-MQBVM with $\alpha=\tau\delta/E_0$ have the same convergence rate,
which is also expected in discrete setting by observing the PinT-QBVM system (\ref{linsysQBVPPinT}) with $\alpha=\delta/E_0$ is identical to the PinT-MQBVM system (\ref{linsysQBVPmodifiedPinT}) with $\alpha=\tau\delta/E_0$.
Under the given assumptions, the best possible worst case error \cite{Tautenhahn_1996} is of the order $(E_0^{1-t/T}\delta^{t/T})$,
which implies the estimate (\ref{deltaPQPM}) is asymptotically optimal as $\tau\to 0$.
In particular, at the initial time $t=0$, the error estimate (\ref{deltaPQPM})  does not imply convergence since it only gives an upper bound $\sqrt{2}E_0$. This convergence rate may be further improved if imposing stronger assumptions on  $z(\cdot,0)$, we refer to \cite{Hao_2009,liu2019quasi} for related discussion.
 
%\begin{remark}

%
%In PinT-MQBVM,
% if choosing $\alpha=\sqrt{\tau}\delta/E_0$, there holds
% 	\begin{align} \label{deltaPM2}
% 		\|y_{\alpha,\delta}^{PM}(\cdot,t)-z(\cdot,t)\|_2^2&\le  {(\sqrt{\tau}E_0/\delta)^{2(1-t/(T+\tau))}} \delta^2+E_0^2(\delta/(E_0\sqrt{\tau}))^{2t/(T+\tau)}\nonumber\\
% 		&=E_0^{2(1-t/(T+\tau))} \delta^{2t/(T+\tau)} \left(\tau^{1-t/(T+\tau)}+\tau^{-t/(T+\tau)} \right),
% 	\end{align}	
% which leads to
% 	\begin{align} \label{deltaPM2T0}
% 	\|y_{\alpha,\delta}^{PM}(\cdot,0)-z(\cdot,0)\|_2^2&\le  E_0^2 \left(\tau+1 \right),
% \end{align}	
%\end{remark}
 \section{Numerical examples} \label{secNum}
 In this section, we present  numerical examples to illustrate the high computational efficiency of our proposed PinT algorithms. All simulations are  implemented in serial with MATLAB on a laptop PC with Intel(R) Core(TM) i7-7700HQ CPU@2.80GHz CPU and 48GB RAM,
 where CPU times (in seconds) are estimated by the timing functions \texttt{tic/toc}.
 For comparison, we accurately solve the full sparse linear systems and those independent complex-shifted linear system in Step-(b) of our PinT solver with
 MATLAB's highly optimized backslash sparse direct
 solver, which runs very fast for
 several thousands (but not millions) of unknowns.
 Parallel speedup results will be reported elsewhere.
 
 We generate the noisy final condition measurement by $g_{\delta}=g\times (1+\epsilon\times \rm{rand}(-1,1)),$
 where $\epsilon> 0$ controls the noise level
 and $\rm{rand}(-1,1)$ denotes random noise uniformly distributed within $[-1,1]$. We then further compute
 the estimated noise bound $\delta:=\|g^{\delta}-g\|_2$. Since $E_0$ is in general unknown, the optimal regularization parameter will be chosen as $\alpha=\delta$ for QBVM, MQBVM, and PinT-QBVM  and $\alpha=\tau\delta$ for PinT-MQBVM.
 Upon solving the discretized full linear system, we   obtain the approximate initial condition $y^0$ and then compute its discrete $L_2(\Omega)$ norm error as
 $
 e_h=\|y^0-z(\cdot,0)\|_2.
 $
 For a fixed mesh size, we would expect $e_h$ to decrease  as the noise level $\delta$ gets smaller, but the discretization errors also play a role.
 
 \subsection{Example 1 \cite{Hao_2009}: 1D with a non-smooth initial condition.}
 Choose $\Omega=(0,\pi), T=1$, and a non-smooth (triangular shaped) initial condition
 \[z(x,0) =\begin{cases} 
 	2x, & 0\le x\le \pi/2, \\
 	2(\pi-x), & \pi/2\le x\le \pi, \\ 
 \end{cases}
 \]
 which gives the exact solution (truncated the first 100 terms as benchmark reference in computation)
 \[
 z(x,t)=\frac{8}{\pi}\sum_{k=1,3,5,\cdots}^\infty \frac{ \cos(k(2x-\pi)/2)}{k^2 } e^{-k^2 t}.
 \]
Since the initial data $z(x, 0)$ is not in $C^1(0,\pi)$, we would not expect to be able to approximate it very accurately.  
Figure \ref{FigEx2} plots the reconstructed initial conditions by different regularization methods against the true initial data $z(x,0)$. 
In  Table \ref{T2A}, we compare the errors and CPU times of different methods for a sequence of decreasing mesh sizes with different levels of noise.
The CPU times indicate both PinT-QBVM and PinT-MQBVM are about \textbf{50 times} faster than QBVM and MQBVM, which is anticipated based on our discussion. 
The reconstruction of PinT-QBVM is more accurate than QBVM with spurious oscillations, while PinT-MQBVM  delivers a comparable accuracy as PinT-QBVM.
The reconstructed initial data by PinT-QBVM and PinT-MQBVM indeed show some irregular fluctuation, which can be smoothed out by choosing a larger (possible non-optimal) regularization parameter. 
Figure \ref{FigEx2B} illustrates the reconstructed initial conditions with $\alpha=\delta/\sqrt{\tau}$ for QBVM, MQBVM, and PinT-QBVM  and $\alpha=\sqrt{\tau}\delta$ for PinT-MQBVM, where the obtained approximations are more regular (smooth) but less accurate (especially for large $\delta$).
One possible explanation for such difference is due to the different condition numbers that magnify the noise.
  \begin{figure}[htp!]
 	\begin{center}
 		\includegraphics[width=1\textwidth]{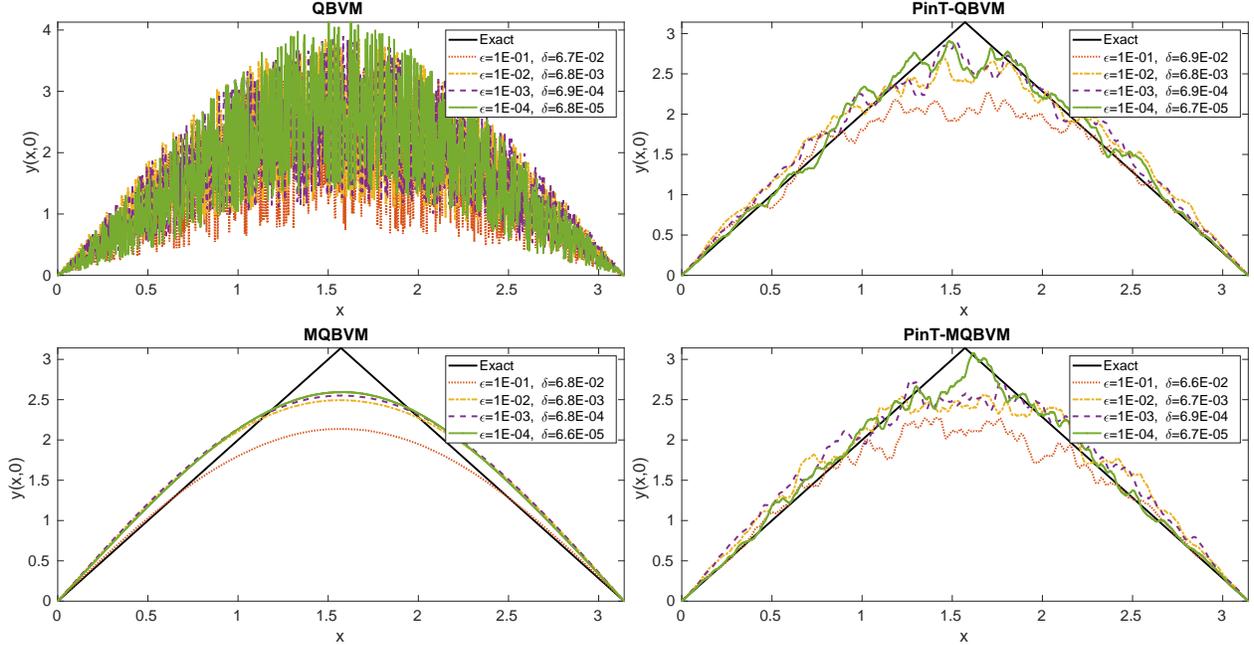}
 	\end{center}
 	\vspace{-2em}
 	\caption{Reconstructed $y(x,0)$ in Ex. 1 with different methods and noise levels $\epsilon\in\{10^{-1},10^{-2},10^{-3},10^{-4}\}$ (using the mesh $h=\pi/1024,\tau=T/1024$, $\alpha=\delta$ for QBVM, MQBVM, and PinT-QBVM  and $\alpha=\tau\delta$ for PinT-MQBVM). The black solid curve is the exact solution.} 
 	
 	\label{FigEx2}
 \end{figure} 
 
 \begin{table}[htp!] 
 	\centering
 	\caption{{Error and CPU results for Ex. 1 with different mesh sizes and noise levels.}}
 	\begin{tabular}{|c|c||cccc||cccc|}\hline 
 		& &\multicolumn{4}{|c|}{Errors in $L_2$ norm}&\multicolumn{4}{|c|}{CPU (in seconds)} \\
 		\hline
 		Method	&$(N_x,N_t)$$\backslash$   $\epsilon$&	 $10^{-1}$&	$10^{-2}$&$10^{-3}$&$10^{-4}$  &	 $10^{-1}$&	$10^{-2}$&$10^{-3}$&$10^{-4}$      \\ \hline

 		\multirow{3}{*}{\parbox{2cm}{QBVM\\($\alpha=\delta$)}}
&(256, 256)	 &1.177 	 &1.076 	 &1.070 	 &1.021 	&0.41 	 &0.44 	 &0.42 	 &0.45 	\\
&(512, 512)	 &1.167 	 &1.075 	 &1.063 	 &1.033 	&1.82 	 &1.75 	 &1.71 	 &1.83 	\\
&(1024,1024)	 &1.165 	 &1.061 	 &1.055 	 &1.024 	&10.71 	 &10.89 	 &10.67 	 &10.80 	\\
 		\hline \hline
 	\multirow{3}{*}{\parbox{2cm}{PinT-QBVM\\($\alpha=\delta$)}}
&(256, 256)	 &0.674 	 &0.459 	 &0.354 	 &0.344 	&0.02 	 &0.02 	 &0.02 	 &0.02 	\\
&(512, 512)	 &0.648 	 &0.427 	 &0.398 	 &0.217 	&0.06 	 &0.05 	 &0.06 	 &0.06 	\\
&(1024,1024)	 &0.683 	 &0.400 	 &0.329 	 &0.279 	&0.22 	 &0.21 	 &0.23 	 &0.21 	\\
 		\hline \hline 
 	\multirow{3}{*}{\parbox{2cm}{MQBVM\\($\alpha=\delta$)}}
&(256, 256)	 &0.640 	 &0.382 	 &0.374 	 &0.337 	&0.44 	 &0.43 	 &0.43 	 &0.43 	\\
&(512, 512)	 &0.640 	 &0.394 	 &0.388 	 &0.329 	&1.75 	 &1.75 	 &1.70 	 &1.84 	\\
&(1024,1024)	 &0.641 	 &0.394 	 &0.375 	 &0.331 	&10.80 	 &10.73 	 &11.09 	 &10.90 	\\
 \hline \hline 
 		\multirow{3}{*}{\parbox{2.2cm}{PinT-MQBVM\\($\alpha=\tau\delta$)}}
&(256, 256)	 &0.654 	 &0.439 	 &0.420 	 &0.292 	&0.02 	 &0.02 	 &0.02 	 &0.02 	\\
&(512, 512)	 &0.667 	 &0.439 	 &0.345 	 &0.275 	&0.05 	 &0.06 	 &0.06 	 &0.06 	\\
&(1024,1024)	 &0.638 	 &0.442 	 &0.397 	 &0.214 	&0.21 	 &0.22 	 &0.22 	 &0.22 	\\
 		\hline
 	\end{tabular}
 	\label{T2A}
 \end{table} 
 
 \begin{figure}[htp!]
 	\begin{center}
 		\includegraphics[width=1\textwidth]{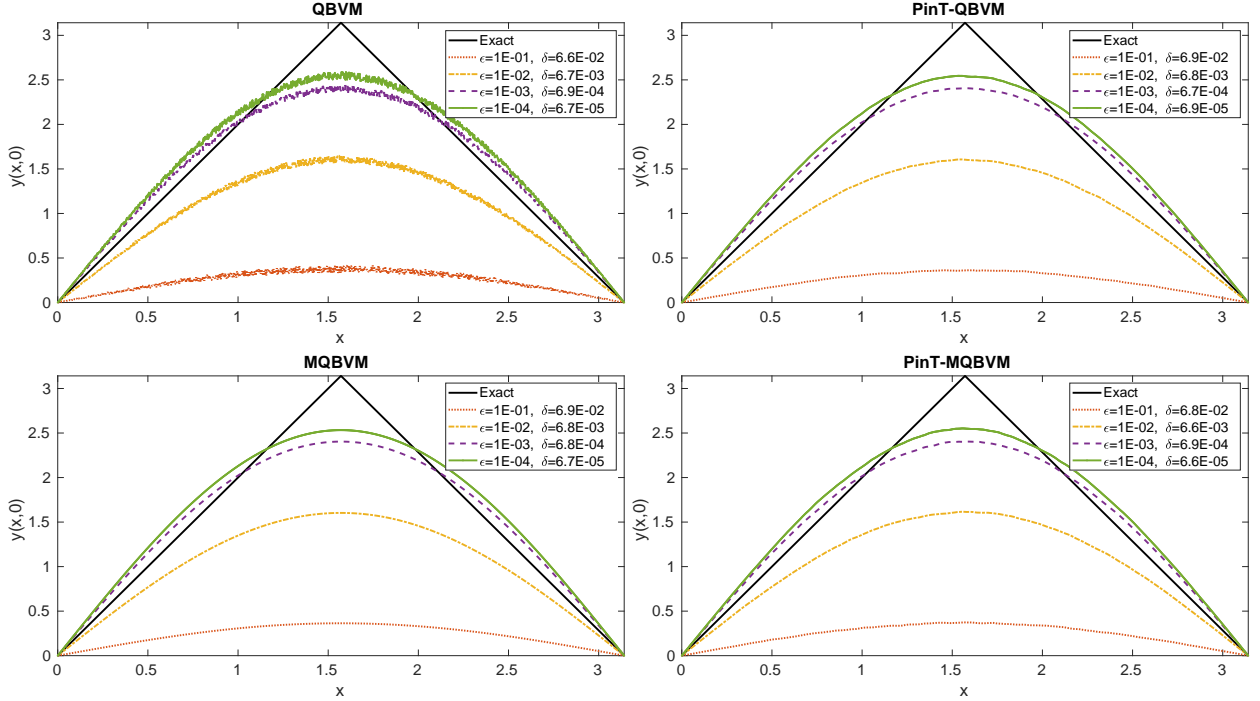}
 	\end{center}
 	\vspace{-2em}
 	\caption{Reconstructed $y(x,0)$ in Ex. 1 with different methods and noise levels $\epsilon\in\{10^{-1},10^{-2},10^{-3},10^{-4}\}$ (using the mesh $h=\pi/1024,\tau=T/1024$, $\alpha=\delta/\sqrt{\tau}$ for QBVM, MQBVM, and PinT-QBVM  and $\alpha=\sqrt{\tau}\delta$ for PinT-MQBVM). The black solid curve is the exact solution.} 
 	
 	\label{FigEx2B}
 \end{figure}

 \subsection{Example 2 \cite{liu2019quasi}: 2D with a smooth initial condition.}
 Choose $\Omega=(0,\pi)^2, T=1$ and 
 $z(x_1,x_2,T)=e^{-2T}\sin( x_1)\sin( x_2)$, which gives the exact solution
 $$
 z(x_1,x_2,t)=e^{-2t}\sin( x_1)\sin( x_2).
 $$
 Figure \ref{FigEx3} plots the reconstructed initial conditions by different regularization methods with different levels of noise
 and Table \ref{T3A} compared the errors and CPU times.
 Except for QBVM, the approximation errors (including both noise and discretization errors) show a comparable convergence as $\epsilon$ is decreased and the mesh is refined.
 Here ``--'' denotes the MATLAB's backslash solver fails to solve the systems due to excessively long computation time.
 For 2D problems, even with a medium-scale mesh size $(N_x,N_t)=(128^2,128)$, 
  the discretized linear systems (with about 2 million unknowns) by QBVM and MQBVM already become too large to be solved by sparse direct solver within a reasonable time on the used PC, while our proposed PinT-QBVM and PinT-MQBVM take only less than 10 seconds with the designed PinT direct solver. This example demonstrates the high efficiency of our fast PinT direct solver in solving large-scale problems.
  To gain even better efficiency in more challenging 3D problems, fast and robust  iterative solvers, such as the multigrid method or the preconditioned Krylov subspace method, need to be developed, where our proposed PinT direct solver can be used as an effective preconditioner.
  It is desirable to design a fast iterative solver with mesh-independent and regularization-robust convergence rate, which is left as future work.
% \begin{figure}[htp!]
% 	\begin{center}
% 		\includegraphics[width=0.4\textwidth]{Ex3_exact_IC}
% 	\end{center}
% 	\vspace{-2em}
% 	\caption{The exact initial condition $y(x_1,x_2,0)=\sin( x_1)\sin( x_2)$ in Ex. 3  for comparison.} 
% 	
% 	\label{FigEx3exact}
% \end{figure} 

\begin{figure}[htp!]
	\begin{center}
		\includegraphics[width=1\textwidth]{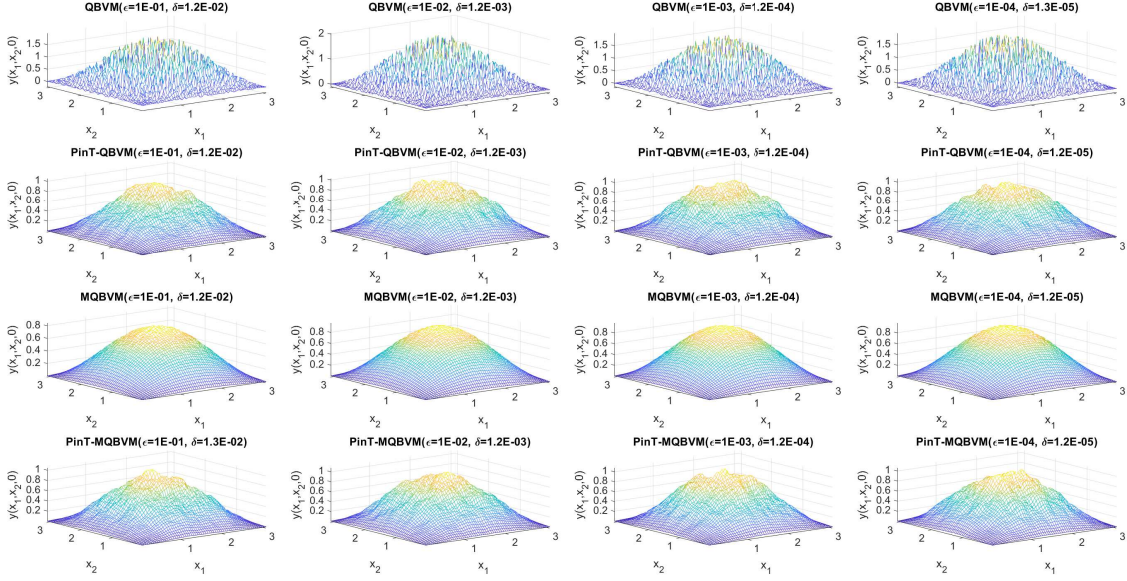}
	\end{center}
	\vspace{-2em}
	\caption{Reconstructed $y(x_1,x_2,0)$ in Ex. 2 with different methods (from top to bottom: QBVM, PinT-QBVM, MQBVM, PinT-MQBVM) and noise levels (from left to right) $\epsilon\in\{10^{-1},10^{-2},10^{-3},10^{-4}\}$ (using the mesh $h=\pi/64,\tau=T/64$).  } 
	
	\label{FigEx3}
\end{figure}

\begin{table}[htp!] 
	\centering
	\caption{{Error and CPU results for Ex. 2 with different mesh sizes and noise levels.}}
	\begin{tabular}{|c|c||cccc||cccc|}\hline 
		&&\multicolumn{4}{|c|}{Errors in $L_2$ norm}&\multicolumn{4}{|c|}{CPU (in seconds)} \\
		\hline
		Method	&$(N_x,N_t)$$\backslash$   $\epsilon$&	 $10^{-1}$&	$10^{-2}$&$10^{-3}$ &$10^{-4}$&	 $10^{-1}$&	$10^{-2}$&$10^{-3}$ &$10^{-4}$     \\ \hline

			\multirow{4}{*}{\parbox{2cm}{QBVM\\($\alpha=\delta$)}}
&($ 16^2$,  16)	 &1.032 	 &0.998 	 &1.001 	 &0.982 	&0.14 	 &0.04 	 &0.08 	 &0.11 	\\
&($ 32^2$,  32)	 &1.019 	 &1.004 	 &0.999 	 &1.002 	&1.06 	 &0.99 	 &1.15 	 &6.92 	\\
&($ 64^2$,  64)	 &1.013 	 &1.000 	 &1.001 	 &0.999 	&57.20 	 &57.12 	 &58.74 	 &59.04 	\\
&($128^2$, 128) & --&--&--&--&--&--&--&--\\ 
		\hline \hline
		\multirow{4}{*}{\parbox{2cm}{PinT-QBVM\\($\alpha=\delta$)}}
&($ 16^2$,  16)	 &0.356 	 &0.264 	 &0.259 	 &0.247 	&0.83 	 &0.02 	 &0.01 	 &0.01 	\\
&($ 32^2$,  32)	 &0.262 	 &0.177 	 &0.164 	 &0.170 	&0.15 	 &0.09 	 &0.09 	 &0.09 	\\
&($ 64^2$,  64)	 &0.212 	 &0.134 	 &0.103 	 &0.118 	&0.93 	 &0.88 	 &0.87 	 &0.88 	\\
&($128^2$, 128)	 &0.170 	 &0.087 	 &0.079 	 &0.080 	&7.69 	 &7.71 	 &7.68 	 &7.73 	\\
		\hline \hline 
	\multirow{4}{*}{\parbox{2cm}{MQBVM\\($\alpha=\delta$)}}
&($ 16^2$,  16)	 &0.370 	 &0.212 	 &0.197 	 &0.195 	&0.22 	 &0.04 	 &0.03 	 &0.03 	\\
&($ 32^2$,  32)	 &0.295 	 &0.122 	 &0.103 	 &0.101 	&1.04 	 &1.10 	 &1.05 	 &1.17 	\\
&($ 64^2$,  64)	 &0.270 	 &0.076 	 &0.054 	 &0.051 	&60.90 	 &61.00 	 &61.07 	 &61.39 	\\
&($128^2$, 128) & --&--&--&--&--&--&--&--\\ 
		\hline \hline 
		\multirow{4}{*}{\parbox{2.2cm}{PinT-MQBVM\\($\alpha=\tau\delta$)}}
&($ 16^2$,  16)	 &0.383 	 &0.271 	 &0.243 	 &0.257 	 &0.01 	 &0.01 	 &0.01 	 &0.01 	\\
&($ 32^2$,  32)	 &0.261 	 &0.185 	 &0.175 	 &0.170 	 &0.09 	 &0.09 	 &0.09 	 &0.09 	\\
&($ 64^2$,  64)	 &0.205 	 &0.113 	 &0.127 	 &0.107 	 &0.89 	 &0.89 	 &0.88 	 &0.90 	\\
&($128^2$, 128)	 &0.170 	 &0.090 	 &0.088 	 &0.084 	 &7.84 	 &7.88 	 &7.85 	 &7.87 	\\
		\hline
	\end{tabular}
	\label{T3A}
\end{table} 

\section{Conclusions}  \label{secFinal}
 Backward heat conduction problems are severely ill-posed and their stable numerical computation requires suitable regularization techniques.
 The quasi-boundary value method and its variants are widely used for regularizing such problems, 
 which upon space-time finite difference discretization leads to large-scale ill-conditioned nonsymmetric sparse linear systems.
 Such all-at-once linear systems are costly to solve by either direct or iterative methods.
 In this paper we have redesigned the well-established quasi-boundary value methods such that the full discretized system matrix admits a block $\omega$-circulant structure that can be solved by a fast diagonalization-based PinT direct solver. Convergence analysis (with the optimal  choice of regularization parameter $\alpha$) for our proposed PinT methods are given.
 Both 1D and 2D examples show our proposed PinT methods  can achieve a comparable accuracy with significantly faster CPU times (even in serial implementation). 
 The novel idea is to maneuver the flexibility of regularization for better structured systems that admit faster system solvers, which is applicable to a wide range of inverse PDE problems as well as other modern regularization methods.  
%\section*{Acknowledgments}
%The authors would like to thank the editor and two anonymous referees for their valuable comments and detailed revision
%suggestions that have greatly improved the overall quality of this paper.

{\small
\bibliographystyle{siam}
\bibliography{DirectPinT,inversePDE,waveControl}
}

\end{document}